\documentclass[12pt]{article}
\usepackage{amsmath,amsthm}
\usepackage{amsfonts,amsmath}
\usepackage{amssymb}

\setlength{\textwidth}{165.0truemm}
\setlength{\textheight}{250.0truemm}
\setlength{\oddsidemargin}{-3.6mm}
\setlength{\evensidemargin}{3.6mm}
\setlength{\topmargin}{-12.5truemm}

\newtheorem{theorem}{Theorem}[section]
\newtheorem{lemma}[theorem]{Lemma}
\newtheorem{corollary}[theorem]{Corollary}

\newtheorem{example}[theorem]{Example}

\renewcommand{\geq}{\geqslant}
\renewcommand{\leq}{\leqslant}

\def\cref#1{Corollary~$\ref{#1}$}

\usepackage{rotating}
\input epsf

\pagestyle{plain}
\usepackage{amsmath}

\begin{document}

\title{A polynomial embedding of pairs of orthogonal partial latin squares }

\author{Diane M. Donovan\footnote{This work was carried out during a scientific visit to Ko\c{c} University supported by TUBITAK Visiting Scientist program 2221} \footnote{Author was also supported by Australian Research Council (grant number DP1092868)}  \\
dmd@maths.uq.edu.au\\
Centre for Discrete Mathematics and Computing,\\
University of Queensland,
St Lucia 4072 Australia \\
\\
Emine \c{S}ule Yaz{\i}c{\i}\footnote{This work was supported by Scientific and Technological Research Council of Turkey TUBITAK Grant Number:110T692}\\
eyazici@ku.edu.tr\\
Department of Mathematics, Ko\c{c} University, Sar{\i}yer,
34450, \.{I}stanbul, Turkey\\
}

\maketitle

\begin{abstract}
We show that a pair of orthogonal partial latin squares of order $n$ can be embedded in a pair of orthogonal latin squares of order at most $16n^4$ and all orders greater than or equal to $48n^4$. This paper provides the first direct polynomial embedding construction.
\end{abstract}

\section{Introduction and Definitions}

Let $[n]=\{0,1,\dots,n-1\}$ and  $N$ represent a set of $n$ distinct elements. A non-empty subset $P$ of $N\times N\times N$  is said to be a {\em  partial latin square}, of order $n$, if for all $(x_1,x_2,x_3),(y_1,y_2,y_3)\in P$ and for all distinct $i,j,k\in \{1,2,3\}$,
\begin{align*}
x_i=y_i\mbox{ and }x_j=y_j\mbox{ implies }x_k=y_k.
 \end{align*}
We may think of $P$ as an $n\times n$ array where symbol $e$ occurs in cell $(r,c)$, whenever $(r,c,e)\in P$. We say cell $(r,c)$ is {\em empty} in $P$ if,  for all  $e\in N$, $(r,c,e)\notin P$.  The {\em volume} of $P$ is $|P|$.
If $|P|=n^2$, then we say that $P$ is a {\em latin square}, of order $n$. We may use a latin square to define a binary operation; that is, for a latin square $A$, define $A(*)$ to be the {\em quasigroup} where for all $x,y\in N$,
\begin{align*}
x* y=z \mbox{ if and only if }(x,y,z)\in A.
\end{align*}
  Two partial latin squares $P$ and $Q$, of the same order are said to be {\em orthogonal} if they have the same non-empty cells and for all $r_1,c_1,r_2,c_2,x,y\in N$
\begin{align*}
\{(r_1,c_1,x),(r_2,c_2,x)\}\subseteq  P\mbox{ implies }\{(r_1,c_1,y),(r_2,c_2,y)\}\not\subseteq Q.
\end{align*}
\begin{example}
\end{example}
\begin{figure}[h]

\begin{align*}
\begin{array}{|c|c|c|c|}
\hline
  0 & 1 & 2 &  \\ \hline
  2 & 0 & 1 & 3 \\ \hline
  3 &  & 0 &  \\ \hline
   & 2 &  & 1 \\ \hline
\end{array}
&&\begin{array}{|c|c|c|c|}
\hline
  0 & 2 & 1 &  \\ \hline
  3 & 1 & 0 & 2 \\ \hline
  1 &  & 2 &  \\ \hline
   & 0 &  & 3 \\ \hline
\end{array}
\end{align*}
 \caption{A pair of orthogonal partial latin squares of order 4}
\end{figure}

Two partial latin squares $P$ and $Q$ are said to be {\em isotopic}, if $Q$ can be obtained  by reordering the rows, and/or reordering the  columns, and/or   relabeling the symbols  of $P$. We say that a partial latin square $P$ can be {\em embedded} in a latin square $L$ if $P\subseteq L$. A pair of orthogonal partial latin squares $(P_1,P_2)$ is said to be embedded in a pair of orthogonal latin squares $(L_1,L_2)$ if $P_i\subseteq L_i$ for each $i\in{1,2}$.

In 1960 Evans \cite{Evans} proved that a partial latin square of order $n$ can always be embedded in some latin square of order $t$ for every $t\geq 2n$. In the same paper Evans raised the question as to whether a pair of finite partial latin squares which are orthogonal can be embedded in a pair of finite orthogonal latin squares.

It is known, (thanks to a series of papers by many authors, see for example \cite{HZ}) that a pair of orthogonal latin squares of order $n$ can be embedded
in a pair of orthogonal latin squares of order $t$ if $t \geq 3n$, the bound of $3n$ being best possible. Obtaining an analogous result for pairs of orthogonal partial latin squares has proved to be an extremely challenging problem. Lindner \cite{Lindner}  showed that a pair of orthogonal partial latin squares can always be finitely embedded in a pair of orthogonal latin squares, however, there was no known method which obtains an embedding of polynomial order (with respect to the order of the partial arrays).
In \cite{HRW}, Hilton et al. formulate some necessary conditions for a pair of orthogonal
partial latin squares to be embedded in a pair of orthogonal latin squares.
 Jenkins \cite{Jenkins}, considered the less difficult problem of embedding a single partial latin square in a latin square which has an orthogonal mate. His embedding was of order $n^2$.

More generally the study of orthogonal latin squares is a very active  area of combinatorics (see \cite{Handbook}). It has been shown that a set of $n-1$ mutually orthogonal latin squares is equivalent to a projective plane of order $n$, (see \cite{Design} for detailed constructions). So the embedding of mutually orthogonal partial latin squares can be viewed as the embedding of sets of partial lines in finite geometries. Also the embedding of partial latin squares has a strong connection with the embedding of block designs. For example many of the initial embeddings of partial Steiner triple systems used embeddings of partial idempotent latin squares (see for example \cite{6n+3}). It has also been suggested that embeddings of block designs with block size $4$ and embeddings of Kirkman triple systems may  make use of embeddings of pairs of  orthogonal partial latin squares (see \cite{HRW}).

The present paper gives a polynomial embedding result for a pair of orthogonal partial latin squares. Specifically we show that a pair of orthogonal partial latin squares of order $n$ can be embedded in a pair of orthogonal latin squares of order at most $16n^4$.

We preface the discussion of our main result with some necessary definitions.

Let $x_1,x_2$,  $y_1,y_2$  and $z_1,z_2$, be pairs of distinct elements of $N$. The  partial latin square $I=\{(x_1,y_1,z_1),(x_1,y_2,z_2),(x_2,y_1,z_2),(x_2,y_2,z_1)\}$ is termed an {\em intercalate}. If $I$ is a subset of a latin square $A$,  then $(A\setminus I)\cup I^\prime$, where $I^\prime=\{(x_1,y_1,z_2),(x_1,y_2,z_1),(x_2,y_1,z_1),(x_2,y_2,z_2)\}$ is a distinct latin square, of the same order. The partial latin square $I^\prime$ is said to be the {\em disjoint mate} of $I$. The form of the partial latin squares $I$ and $I^\prime$ is displayed below. Here the sideline lists the row labels and the headline lists the column labels.
 \begin{align*}
 \begin{array}{c|cc}
 I&y_1&y_2\\
 \hline
  x_1&z_1&z_2\\
   x_2&z_2&z_1
   \end{array}&&
    \begin{array}{c|cc}
 I^\prime&y_1&y_2\\
 \hline
  x_1&z_2&z_1\\
   x_2&z_1&z_2
   \end{array}
 \end{align*}

 A set $T\subseteq A$, where $A$ is a latin square of order $n$, is said to be a {\em transversal}, if
 \begin{itemize}
 \item[-] $|T|=n$, and
 \item[-] for all distinct $(r_1,c_1,x_1),(r_2,c_2,x_2) \in T$, $r_1\neq r_2$, $c_1\neq c_2$ and $x_1\neq x_2$.
 \end{itemize}

Note that a latin square has an orthogonal mate if and only if it can be partitioned into disjoint transversals.

Let $G$ denote the latin square corresponding to the elementary abelian $2$-group of order $2^m$ on the set $[2^m]$ and $G(\star)$ denote the corresponding quasigroup where the operation is the bitwise addition. Let $0$ denote the identity element and recall that for all elements $x\in [2^m]$, $x\star x=0$.  Since $G(\star)$ is associative and commutative, to reduce unnecessary complexity, we will suppress the  $\star$ notation and rewrite $x\star y$ as $xy$ and in addition we will omit unnecessary brackets.

\section{The basic construction}\label{sc:basic case}
In this section we  show that if $P$ and $Q$ are orthogonal partial latin squares, such that $P$ contains no repeated symbols, then we may embed $P$ and $Q$ in orthogonal latin squares of order  $2^{2m}$, for suitably chosen $m$.

We begin by noting that any partial latin square, of order $n$, can be embedded in a latin square of any order greater than or equal to $2n$ (see \cite{Evans}).

Let $P$ and $Q$ be two orthogonal partial latin squares, of order $n$, volume $v$ and based on the set of symbols $[n]$. Assume  that $P$ satisfies the property
\begin{itemize}
\item[-]  for all $x\in[n]$,
$|\{(r,c,x)\in P\mid 0\leq r,c\leq n-1\}|\leq 1.$
\end{itemize}

Embed $Q$ in a latin square $B$ (quasigroup $B(\circ)$) of order $2^m$, where $m$ is a positive integer such that $2n\leq 2^m$, and embed $P$ in an $2^m\times 2^m$ array $A$ in which each symbol in $[2^{2m}]$ occurs exactly once. We will abuse notation  and write $A(*)$ for the algebraic structure defined by the array $A$ where, for all $r,c\in [2^m]$, $r* c=x$ if and only if $(r,c,x)\in A$ (note $x\in [2^{2m}]$).
We make the following simple observation about $A$.

\begin{lemma}\label{A}
For all $r_1,r_2,c_1,c_2\in [2^m]$, if $r_1*c_1=r_2*c_2$, then $r_1=r_2$ and $c_1=c_2$.
\end{lemma}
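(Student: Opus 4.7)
The plan is to prove this essentially by unpacking the definition of $A$. Recall that $A$ is a $2^m\times 2^m$ array into which $P$ has been embedded, and which has the crucial property that each of the $2^{2m}$ symbols in $[2^{2m}]$ appears \emph{exactly once} in $A$. Since $A$ has exactly $2^{2m}$ cells, this means $A$ realises a bijection between cells and symbols.

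Suppose $r_1*c_1 = r_2*c_2 = x$. By the defining relation $r*c=x \iff (r,c,x)\in A$, we have both $(r_1,c_1,x)\in A$ and $(r_2,c_2,x)\in A$. But the symbol $x$ appears in only one cell of $A$, so $(r_1,c_1)=(r_2,c_2)$, whence $r_1=r_2$ and $c_1=c_2$.

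There is no real obstacle here; the lemma is a restatement of the fact that $A$ is a bijective labelling, which was imposed when $A$ was constructed (every symbol of $[2^{2m}]$ occurs exactly once in a $2^m\times 2^m$ array). The only mild care needed is to be clear that $*$ is a partial/total operation defined by the array $A$ (so $r*c$ denotes the unique symbol in cell $(r,c)$), and to note explicitly that the bijective condition forces injectivity of the map $(r,c)\mapsto r*c$. The lemma will be used later to show that $A$ behaves, with respect to the orthogonality argument, like a Latin square of order $2^{2m}$ even though it is only a $2^m\times 2^m$ array.
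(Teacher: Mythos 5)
Your proof is correct and matches the paper's argument exactly: both simply observe that each symbol of $[2^{2m}]$ occurs in precisely one cell of $A$, so equal symbols force equal cells. No further comment is needed.
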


\begin{proof} This follows directly from the fact that each symbol in $[2^{2m}]$ occurs in precisely one cell of $A$.
\end{proof}

We will construct arrays  ${\cal A}$ and ${\cal B}$, of order $2^{2m}$, which have the following properties:
 \begin{itemize}
 \item[-] The row and columns of ${\cal A}$ and ${\cal B}$ are
indexed by the ordered pairs $(x,y)\in [2^m]\times [2^m]$.
 \item[-] The symbols of ${\cal A}$ will be chosen from the set $[2^{2m}]$.
 \item[-] The symbols of ${\cal B}$ will be chosen from the set $[2^m]\times [2^m]$.
 \item[-] The array  ${\cal A}$ can be  partitioned into $2^m\times 2^m$ subarrays. That is, for $p,q\in [2^m]$, the intersection of rows $\{p\}\times [2^m]$ with columns $\{q\}\times [2^m]$ defines a subarray containing an isotopic copy of $A$ where the rows and columns are permuted, but the symbols are left unchanged.
 \item[-]  The array ${\cal B}$ can be  partitioned into $2^m\times 2^m$ subarrays. That is, for each  $p,q\in [2^m]$,
 the intersection of rows $\{p\}\times [2^m]$ with columns $\{q\}\times [2^m]$ defines a subarray containing an isotopic copy of $B$ where the rows,  columns and symbols have been permuted. \end{itemize}

 Define ${\cal A}$ and ${\cal B}$  as
\begin{align}
{\cal A}=&\{((p,r),(q,c),q  r* p  c)
\mid   (p,q,p  q)\in G\wedge (r,c,r* c)\in  A\},\label{eq:A}\\
{\cal B}=&\{((p,r),(q,c),(p  q,p \, (q  r\circ p  c))
  \mid (p,q,p  q)\in G\wedge (r,c,r\circ c)\in  B\}. \label{eq:B}
\end{align}
Recall that the $\star$ notation has been suppressed, so for instance, $p \, (q  r\circ p  c)=p \star ((q\star  r)\circ (p\star  c))$ in $G(\star)$.

 The following lemma verifies that  ${\cal A}$ and ${\cal B}$ are latin squares of order $2^{2m}$  and that ${\cal A}$  can be partitioned into transversals, one transversal for each symbol of ${\cal B}$ .

\begin{lemma} Let $P$ and $Q$ be partial latin squares of order $n$ and volume $v$. Let $m$ be an integer such that $2n\leq 2^m$. Embed $P$ in an $2^{m}\times 2^{m}$ array $A$, where each cell contains a distinct symbol from the set $[2^{2m}]$. Embed $Q$ in a latin square $B$ of order $2^{m}$.
 Let ${\cal A}$ be as defined in \eqref{eq:A}. Then ${\cal A}$ is a latin square of order $2^{2m}$ and can be partitioned into $2^{2m}$ transversals ${\cal T}_{(z,d)}$, where $(z,d)\in [2^{m}]\times [2^{m}]$.
  Let ${\cal B}$ be as defined in \eqref{eq:B}. Then ${\cal B}$ is a latin square of order $2^{2m}$.
\end{lemma}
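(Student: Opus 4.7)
The plan is to prove three things in turn: $\mathcal{A}$ is a latin square, $\mathcal{B}$ is a latin square, and the fibres of $\mathcal{B}$ form transversals of $\mathcal{A}$. Throughout I will use the single key observation that $G(\star)$ is a group, so that each of the maps $t\mapsto pt$, $t\mapsto qt$, etc.\ is a bijection of $[2^m]$, together with Lemma~\ref{A}.

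For $\mathcal{A}$, I would fix a row index $(p,r)$ and let $(q,c)$ vary over $[2^m]\times[2^m]$. Since $q\mapsto qr$ and $c\mapsto pc$ are bijections of $[2^m]$, the pair $(qr,pc)$ ranges bijectively over $[2^m]\times[2^m]$. By Lemma~\ref{A}, the map $(r',c')\mapsto r'* c'$ is an injection from $[2^m]\times[2^m]$ into $[2^{2m}]$, hence a bijection onto $[2^{2m}]$. Composing, the symbols $qr* pc$ hit each element of $[2^{2m}]$ exactly once. A symmetric argument works for a fixed column $(q,c)$ with $(p,r)$ varying, so $\mathcal{A}$ is a latin square.

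For $\mathcal{B}$, I would again fix a row $(p,r)$. The first coordinate of the entry at column $(q,c)$ is $pq$, which depends only on $q$, and $q\mapsto pq$ is a bijection of $[2^m]$. So for each $z\in[2^m]$ there is a unique $q$ producing first coordinate $z$; with this $q$ fixed and $c$ varying, $pc$ ranges bijectively over $[2^m]$, so $qr\circ pc$ ranges bijectively (using that $B$ is a latin square, i.e.\ $\circ$ is a quasigroup), and therefore $p(qr\circ pc)$ ranges bijectively over $[2^m]$. Hence each element of $[2^m]\times[2^m]$ occurs exactly once in row $(p,r)$. The column argument is analogous, using that $r\mapsto qr$ is a bijection for fixed $q$. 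Thus $\mathcal{B}$ is a latin square of order $2^{2m}$.

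For the transversal partition, define
\[
\mathcal{T}_{(z,d)}=\bigl\{\,((p,r),(q,c),qr* pc)\in\mathcal{A} : \mathcal{B}\text{ has symbol }(z,d)\text{ in cell }((p,r),(q,c))\bigr\}.
\]
Because $\mathcal{B}$ is a latin square, $\mathcal{T}_{(z,d)}$ contains exactly one cell from each row and each column of $\mathcal{A}$. It remains to show the $\mathcal{A}$-symbols are pairwise distinct, equivalently, that the joint system
\[
q_1r_1* p_1c_1=q_2r_2* p_2c_2,\qquad p_1q_1=p_2q_2,\qquad p_1(q_1r_1\circ p_1c_1)=p_2(q_2r_2\circ p_2c_2)
\]
forces $(p_1,r_1,q_1,c_1)=(p_2,r_2,q_2,c_2)$. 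By Lemma~\ref{A} the first equation gives $q_1r_1=q_2r_2$ and $p_1c_1=p_2c_2$; substituting into the third and cancelling in $G(\star)$ yields $p_1=p_2$; then $p_1q_1=p_2q_2$ gives $q_1=q_2$, and the two equalities from Lemma~\ref{A} give $r_1=r_2$ and $c_1=c_2$. So each $\mathcal{T}_{(z,d)}$ is a transversal of $\mathcal{A}$, and there are $2^{2m}$ of them, one for each symbol of $\mathcal{B}$.

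The main obstacle is the last step: the cells of $\mathcal{T}_{(z,d)}$ are selected by a condition involving $\mathcal{B}$, but I must exclude coincidences of $\mathcal{A}$-symbols. The leverage is that the second coordinate of $\mathcal{B}$ involves an extra factor of $p$ outside the product $qr\circ pc$, so once Lemma~\ref{A} has pinned down $qr$ and $pc$ (and hence $qr\circ pc$), cancellation in the elementary abelian group isolates $p$, and from there the remaining parameters collapse in order.
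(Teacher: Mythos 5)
Your proposal is correct and follows essentially the same route as the paper: $\mathcal{T}_{(z,d)}$ is defined by the same condition (the cells where $\mathcal{B}$ carries symbol $(z,d)$), and the crucial step --- that the $\mathcal{A}$-symbols in such a set are distinct --- uses exactly the paper's argument via Lemma~\ref{A} followed by cancellation in $G(\star)$ to isolate $p$. The only differences are organizational: you verify that $\mathcal{B}$ is a latin square directly rather than citing the direct-product description, and you derive the one-cell-per-row-and-column property of $\mathcal{T}_{(z,d)}$ from $\mathcal{B}$ being a latin square instead of re-proving it by hand, which is a slightly cleaner packaging of the same content.
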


\begin{proof} It is not hard to see that ${\cal B}$ is the latin square obtained by taking the direct product of $G$ with $B$ and then permuting the rows and columns and relabeling the entries within the subsquares.

 Assume ${\cal A}$ is not a latin square then, without loss of generality, there exists  a row (column), say $(p,r)$, which contains a repeated  symbol. So assume that there exists  columns $(q,c)$ and $(t,w)$, such that $q  r* p  c = t  r* p  w$. But $q  r,p  c,t  r,p  w\in [2^m]$, so  Lemma \ref{A} implies $q  r= t r$  and $p  c = p  w,$ and since  $G( \star)$ is a group, $q=t$ and $c=w$.   Thus $ {\cal A}$ is a latin square of order $[2^{2m}]$.

Let $(z,d)\in [2^{m}]\times [2^{m}]$ and consider
\begin{align}
{\cal T}_{(z,d)}=\{((p,r),(q,c),q  r* p  c)\in {\cal A}   \mid p  q=z, p\,  (q  r\circ p  c)=d \}.\label{eq:tran}
\end{align}
  For each $(z,d)\in [2^m]\times [2^m]$ we will show that ${\cal T}_{(z,d)}$ is a transversal.

  Let $((p,r),(q,c),q  r* p  c),((s,u),(t,w),t  u* s  w)$ be distinct ordered triples of $ {\cal T}_{(z,d)}$, so $p  q=z=s  t$ and $p  (q  r\circ p  c)=d=s  (t  u\circ s  w)$. We wish to show that, respectively, the first coordinates, the second coordinates and the third coordinates are not equal.

  Assume that this is not the case, so for instance assume $(p,r)=(s,u)$. Since $G(\star )$ corresponds to the elementary abelian $2$-group and $B(\circ)$ is a quasigroup, if
$p= s$ and $p  q=s  t$  then $q= t$, and if $r=u$   and  $p(q  r\circ p  c)=s(t  u\circ s  w)$, then
$q  r\circ p  c=q  r\circ p  w$ implying $c=w$. Thus $ {\cal T}_{(z,d)}$ contains at most one cell from each row and  similarly at most one cell from each column. In addition, for each $p\in [2^m]$ there exists a $q\in [2^m]$ such that $pq=z$, and given such a pair $p,q,$ for each $r\in [2^m]$ there exists a $c\in [2^m]$ such that $p\,  (q  r\circ p  c)=d$. So we may also deduce that  $|{\cal T}_{(z,d)}|=2^{2m}$.

We are left to consider the case where $(p,r)\neq (s,u)$, $(q,c)\neq (t,w)$, but $q  r* p  c=t  u* s  w$.

Since
$q  r,p  c,t  u,s  w\in [2^m]$, Lemma \ref{A} implies $q  r=t  u$ and $p  c=s  w$ and since $B(\circ)$ is a quasigroup, $q  r\circ p  c= t  u\circ s  w.$

But we have  assumed that $p \, (q  r\circ p  c)=d=s  (t  u\circ s  w)$ and so $p= s$.  Since $p  c=s  w$, we obtain $c=w$. Also $pq=z=st=pt$ so $q=t$. But $qr=tu$, so $r=u$.

Thus the collection ${\cal T}_{(z,d)}$, $(z,d)\in [2^m]\times [2^m]$, partitions ${\cal A}$ into disjoint transversals.

\end{proof}

\begin{theorem}\label{basic case} Suppose $2^{m}\geq2n$. Let $P$ and $Q$ be a pair of orthogonal partial latin squares of order $n$, such that each symbol of $[n]$
occurs in at most one cell of $P$. Then $P$ and $Q$ can be embedded in orthogonal latin squares ${\cal A}$ and ${\cal B}$ of order
$2^{2m}$.
\end{theorem}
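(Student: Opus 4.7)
The plan is to leverage the preceding lemma, which has already done the heavy lifting: it establishes that $\mathcal{A}$ and $\mathcal{B}$ are latin squares of order $2^{2m}$, and that $\mathcal{A}$ decomposes into the $2^{2m}$ transversals $\mathcal{T}_{(z,d)}$ indexed by $(z,d)\in[2^m]\times[2^m]$. Two tasks remain: verify that $\mathcal{A}$ and $\mathcal{B}$ are orthogonal, and verify that $P$ and $Q$ embed (up to the obvious relabelling identifying $(0,x)$ with $x$) in $\mathcal{A}$ and $\mathcal{B}$ respectively.

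I would first dispose of orthogonality. By \eqref{eq:B}, the symbol of $\mathcal{B}$ in the cell $((p,r),(q,c))$ is precisely $(pq,\,p\,(qr\circ pc))$, so the cells of $\mathcal{B}$ containing the symbol $(z,d)$ are exactly those satisfying $pq=z$ and $p\,(qr\circ pc)=d$. Comparing with \eqref{eq:tran}, this is literally the defining condition for the transversal $\mathcal{T}_{(z,d)}$. Hence the transversal of $\mathcal{A}$ labelled $(z,d)$ occupies exactly the cells where $\mathcal{B}$ takes symbol $(z,d)$, and since $\mathcal{A}$ is partitioned into these transversals, the standard characterisation of orthogonal mates via transversal decompositions immediately yields that $\mathcal{A}$ and $\mathcal{B}$ are orthogonal.

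For the embedding I would specialise to $p=q=0$, the identity of $G(\star)$. Because $0\star x=x$ for every $x\in[2^m]$, the restriction of $\mathcal{A}$ to rows $\{0\}\times[2^m]$ and columns $\{0\}\times[2^m]$ is $\{((0,r),(0,c),\,r*c):(r,c,r*c)\in A\}$, a literal copy of $A$; the same block of $\mathcal{B}$ is $\{((0,r),(0,c),\,(0,r\circ c)):(r,c,r\circ c)\in B\}$, an isotopic copy of $B$. Since $P\subseteq A$ and $Q\subseteq B$ by construction, the identification $(0,x)\leftrightarrow x$ realises $P\subseteq\mathcal{A}$ and $Q\subseteq\mathcal{B}$. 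The substantive combinatorial work---in particular the use of \lref{A} and the exponent-two property of $G(\star)$---was already carried out in the previous lemma; the main subtlety for the theorem itself is simply recognising that the symbol alphabet of $\mathcal{B}$ and the indexing set of the transversal partition of $\mathcal{A}$ are, by design, the same set $[2^m]\times[2^m]$, so the matching between transversals and symbols is tautological and I do not anticipate any further obstacle.
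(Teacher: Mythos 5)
Your proposal is correct and follows essentially the same route as the paper: the paper's proof likewise invokes the transversal decomposition $\{{\cal T}_{(z,d)}\}$ from the preceding lemma and observes that the cells of ${\cal T}_{(z,d)}$ carry the symbol $(z,d)$ in ${\cal B}$, from which orthogonality is immediate. You merely make explicit the embedding step (restricting to the $p=q=0$ block), which the paper leaves as ``immediate.''
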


\begin{proof} The above construction gives latin squares    ${\cal A}$ and ${\cal B}$ satisfying \eqref{eq:A} and \eqref{eq:B}. Then for all $(z,d)\in [2^{m}]\times [2^{m}]$, take  ${\cal T}_{(z,d)}$ as defined in \eqref{eq:tran}. Note that for fixed $(z,d)$ the cells in ${\cal B}$ defined by ${\cal T}_{(z,d)}$  all contain the symbol
$(z,d)$ in ${\cal B}$. The result is now immediate.
\end{proof}

\section{Orthogonal trades in $A$}\label{sc:trades}

In the next section we would like to vary the above construction to allow for repeated symbols in the partial latin square $P$. We will do this by showing that ${\cal A}$ contains carefully selected intercalates which can be removed and replaced by disjoint mates to obtain a latin square with repeated elements in the subsquare defined by the intersection of rows $(0,r)$, $0\leq r\leq 2^m-1$ and columns $(0,c)$, $0\leq c\leq 2^m-1$ (termed the top left corner), such that the new latin square is also orthogonal to ${\cal B}$.

In the previous section, in ${\cal A}$ the cell defined by row $(0,r_2)$ and column $(0,c_2)$ contained the symbol $r_2*c_2$ and we want to be able to replace it by the symbol $r_1 *c_1$ which also occurs in the cell defined by row $(0,r_1)$ and column $(0,c_1)$. In this way we may introduce repeats into the top left corner in ${\cal A}$. Since ${\cal A}$ is a latin square the repeated symbol $r_1 *c_1$  can not occur in the same row or column so we will require $r_1\neq r_2$ and $c_1\neq c_2$. Furthermore, as the new square should also be orthogonal to ${\cal B}$  we will require the corresponding entries in ${\cal B}$ are different, so $r_1\circ c_1\neq r_2\circ c_2$.

  In the next two lemmas we show that this switch is always possible provided $r_1c_1\neq r_2c_2$ in the elementary abelian 2-group.

We formalize these conditions as follows:

For positive integers $r_1,r_2,r_3,c_1,c_2,c_3,x\in [2^m]$, where $(r_1,c_1,x),(r_2,c_2,x)\in P$ and where appropriate $(r_3, c_3,x)\in P$, define Conditions C1 to C4 as follows.
 \begin{itemize}
 \item[C1.] $r_1< r_2<r_3$.
 \item[C2.] $c_1,c_2, c_3$ are all distinct.
 \item[C3.] $r_1\circ c_1, r_2\circ c_2, r_3\circ c_3$ are all distinct.
 \item[C4.] $r_ir_j\neq c_ic_k$ whenever $i\neq j$, $i\neq k$ and $i,j,k\in \{1,2,3\}$.

      \end{itemize}

\begin{lemma}\label{2intercalate-A}  Let $r_1,r_2,c_1,c_2\in [2^m]$ and $(r_1,c_1,x),\ (r_2,c_2,x)$ be a pair of triples in $P$ which satisfy Conditions $C1$, $C2$, $C3$, and $C4$.
   Then ${\cal A}$  contains the following two intercalates, the union of which  will be termed $I_A(r_1,c_1;r_2,c_2)$.

\begin{footnotesize}
\begin{align*}
\begin{array}{c}
\\
\begin{array}{r|ccccccccccc}
&&&&&(r_1r_2(r_1\circ c_1)(r_2\circ c_2),&((r_1\circ c_1)(r_2\circ c_2), & \\
I_A(r_1,c_1;r_2,c_2)\subset{\cal A}                     &(0,c_2) &&(r_1r_2,c_1)&&c_2(r_1\circ c_1)(r_2\circ c_2))&c_1(r_1\circ c_1)(r_2\circ c_2))\\
\hline
&&&&&&\\
R_1=(0,r_2)&r_2* c_2& & r_1* c_1\\
&&&&&&\\
&&&&&&\\
R_2=(c_1c_2,r_1)&r_1* c_1& & r_2* c_2\\
&&&&&&\\
&&&&&&\\
R_3=(c_1c_2(r_1\circ c_1)(r_2\circ c_2),&&&&&&&&\\
r_2(r_1\circ c_1)(r_2\circ c_2))&&&&&r_1* c_1&  r_2* c_2\\&&\\
&&&&&&\\
R_4=((r_1\circ c_1)(r_2\circ c_2),&&&& & \\
r_1(r_1\circ c_1)(r_2\circ c_2))&&&&&r_2* c_2&  r_1* c_1\\
\end{array}\\
\end{array}
\end{align*}
\end{footnotesize}

Further in ${\cal A}$ these eight entries occur in eight distinct cells, each occurring in a different $2^m\times 2^m$ subsquare which contains
a copy of $A$.
\end{lemma}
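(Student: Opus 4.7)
The plan is to prove the lemma by direct substitution into \eqref{eq:A}, followed by a pairwise distinctness check using Conditions C1--C4. Since $A$ is a complete $2^m\times 2^m$ array, the triple $(r,c,r*c)$ lies in $A$ for every $(r,c)\in[2^m]^2$, so each of the eight listed row/column pairs automatically contributes a triple of ${\cal A}$; what needs to be checked is that the symbol $qr*pc$ dictated by \eqref{eq:A} agrees with the tabulated entry.

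Write $\alpha := (r_1\circ c_1)(r_2\circ c_2)$ for brevity. Unpacking $qr*pc$ at each cell and repeatedly using $tt=0$ in the elementary abelian $2$-group $G(\star)$, every factor involving $r_1,r_2,c_1,c_2,\alpha$ cancels with another, leaving either $r_1*c_1$ or $r_2*c_2$. For instance, the cell $((c_1 c_2,r_1),(r_1 r_2,c_1))$ evaluates to $(r_1 r_2)r_1 * (c_1 c_2)c_1 = r_2*c_2$. Taken together, the eight computations show that rows $R_1,R_2$ together with columns $(0,c_2),(r_1 r_2,c_1)$ form a $2\times 2$ block of shape $\bigl(\begin{smallmatrix} r_2*c_2 & r_1*c_1\\ r_1*c_1 & r_2*c_2 \end{smallmatrix}\bigr)$, and rows $R_3,R_4$ together with the remaining two columns form a block of shape $\bigl(\begin{smallmatrix} r_1*c_1 & r_2*c_2\\ r_2*c_2 & r_1*c_1 \end{smallmatrix}\bigr)$; both are intercalates because $r_1*c_1\ne r_2*c_2$ by \lref{A} (the cells $(r_1,c_1),(r_2,c_2)$ of $A$ are distinct).

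The main obstacle is verifying that the eight cells occupy eight distinct $2^m\times 2^m$ subsquares. Since a cell $((p,r),(q,c))$ of ${\cal A}$ sits in the subsquare indexed by $(p,q)$, I would tabulate the eight indices
\begin{align*}
&(0,0),\ (0,r_1 r_2),\ (c_1 c_2,0),\ (c_1 c_2,r_1 r_2),\\
&(c_1 c_2\alpha,r_1 r_2\alpha),\ (c_1 c_2\alpha,\alpha),\ (\alpha,r_1 r_2\alpha),\ (\alpha,\alpha),
\end{align*}
and rule out coincidences among them. Most of the $\binom{8}{2}=28$ pairs disagree in a single coordinate by one of C1, C2, or C3 alone (which respectively give $r_1 r_2\ne 0$, $c_1 c_2\ne 0$, and $\alpha\ne 0$). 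The subtle cases are the four pairings matching a first-block index with its diagonally corresponding second-block index: in each such case, a collision would require $c_1 c_2=\alpha$ and $r_1 r_2=\alpha$ simultaneously, which forces $c_1 c_2=r_1 r_2$ and contradicts C4. This uniform C4-collapse is the only genuinely subtle point; the remaining bookkeeping is mechanical.
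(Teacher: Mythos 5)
Your proof is correct and follows essentially the same route as the paper's: direct evaluation of the symbol $qr*pc$ at the eight cells, then a distinctness check in which C1--C3 give $r_1r_2\neq 0$, $c_1c_2\neq 0$, $(r_1\circ c_1)(r_2\circ c_2)\neq 0$, and C4 rules out the only surviving collision $c_1c_2=(r_1\circ c_1)(r_2\circ c_2)=r_1r_2$. The only cosmetic difference is that you fold the cell-distinctness and subsquare-distinctness claims into a single comparison of the eight subsquare indices, whereas the paper first checks that the four rows and four columns are pairwise distinct and then treats the subsquares separately.
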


\begin{proof} The existence of the eight entries is a straightforward computation following from the definition of ${\cal A}$.

Using the row labels $R_1, R_2, R_3, R_4$ as set out in the above table, the fact that $c_1\neq c_2$ and $r_1\circ c_1\neq r_2\circ c_2$, hence $c_1c_2\neq0$ and $(r_1\circ c_1)(r_2\circ c_2)\neq 0$, implies $R_1\neq R_2$, $R_1\neq R_3$, $R_2\neq R_4$ and $R_3\neq R_4$. As $r_1\neq r_2$, $R_1\neq R_4$, $R_2\neq R_3$.
 Similarly one can show that all the columns are distinct. Thus all four rows and columns are distinct from each other.

 We are left to prove the entries occur in different subsquares. So assume that this is not the case and  two of the entries of $I_A(r_1,c_1;r_2,c_2)$ occur in the same $2^m\times 2^m$ subsquare. Recall, that each such subsquare contains a copy of $A$ and the symbols in the cells of $A$ are all distinct. Also note  that cells are in the same subsquare if and only if row wise they have the same  first coordinates and column wise they have the same first coordinates. Now as $r_1r_2\neq 0$, $c_1c_2\neq 0$ and $r_1\circ c_1\neq r_2 \circ c_2$, the only possibility is that $c_1c_2(r_1\circ c_1)(r_2\circ c_2)=0$ and $r_1r_2(r_1\circ c_1)(r_2\circ c_2)=0$. These equations imply $r_1r_2c_1c_2=0$, which contradicts Condition C4 ($r_1r_2\neq c_1c_2$). Thus all eight entries occur in different $2^m\times 2^m$ subsquares.
\end{proof}
\begin{lemma}\label{2intercalate-B} Let $r_1,r_2,c_1,c_2\in [2^m]$ and $(r_1,c_1,x),\ (r_2,c_2,x)$ be a pair of triples in $P$ which satisfy Conditions $C1$, $C2$, $C3$, and $C4$. Then ${\cal B}$  contains the following eight entries. 

\begin{footnotesize}
\begin{align*}
\begin{array}{c}
\\
\begin{array}{r|ccccccccccc}
&&&&(r_1r_2(r_1\circ c_1)(r_2\circ c_2),&((r_1\circ c_1)(r_2\circ c_2), & \\
 J_B(r_1,c_1;r_2,c_2)\subseteq {\cal B}                       &(0,c_2) &(r_1r_2,c_1)&&c_2(r_1\circ c_1)(r_2\circ c_2))&c_1(r_1\circ c_1)(r_2\circ c_2))\\
\hline
&&&&&&\\
(0,r_2)&(0,r_2\circ c_2)&  (r_1r_2,r_1\circ c_1)\\
&&&&&&\\
&&&&&&\\
(c_1c_2,r_1)&(c_1c_2,&  (r_1r_2c_1c_2,\\
&c_1c_2(r_1\circ c_1))&  c_1c_2(r_2\circ c_2))\\
&&&&&&\\
&&&&&&\\
(c_1c_2(r_1\circ c_1)(r_2\circ c_2),&&&&(r_1r_2c_1c_2,&(c_1c_2,\\
r_2(r_1\circ c_1)(r_2\circ c_2))&&&&c_1c_2(r_2\circ c_2))&c_1c_2(r_1\circ c_1))\\
&&&&&&\\
((r_1\circ c_1)(r_2\circ c_2),&&&& & \\
r_1(r_1\circ c_1)(r_2\circ c_2))&&&&(r_1r_2,r_1\circ c_1)&  (0,r_2\circ c_2)\\
\end{array}\\
\end{array}
\end{align*}
\end{footnotesize}
\end{lemma}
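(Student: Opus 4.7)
The plan is a direct verification from the definition of ${\cal B}$ in equation~\eqref{eq:B}. For each of the eight pairs of row and column indices $((p,r),(q,c))$ listed in the table, I compute $pq$ and $p(qr\circ pc)$ and check that the ordered pair $(pq,\,p(qr\circ pc))$ coincides with the symbol displayed in the corresponding cell. Since $G$ and $B$ are latin squares of order $2^m$, the triples $(p,q,pq)\in G$ and $(r,c,r\circ c)\in B$ exist for every choice of $p,q,r,c\in[2^m]$, so nothing has to be checked about existence, and Conditions~C1--C4 play no role in the verification itself; they were used in \lref{2intercalate-A} only to show the eight cells are distinct and lie in distinct subsquares, and since the eight row/column indices here are identical to those in \lref{2intercalate-A}, that distinctness is inherited automatically.

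The indispensable algebraic fact that makes each of the eight calculations short is that $G(\star)$ is the elementary abelian $2$-group, so $x\star x=0$ and consequently $xyz\star z=xy$ for any $x,y,z\in[2^m]$. For instance, for the cell $((c_1c_2,r_1),(r_1r_2,c_1))$ one has $p=c_1c_2$, $q=r_1r_2$, $r=r_1$, $c=c_1$, giving $pq=r_1r_2c_1c_2$, $qr=r_2$, $pc=c_2$, and therefore $p(qr\circ pc)=c_1c_2(r_2\circ c_2)$, which is precisely the symbol shown. For the cells in the lower two rows, where $p$ carries an extra factor of $(r_1\circ c_1)(r_2\circ c_2)$ and the column index carries the same factor in both coordinates, these factors cancel in pairs within $qr$ and $pc$, reducing each such computation to an upper-row calculation followed by a single multiplication by the leftover $(r_1\circ c_1)(r_2\circ c_2)$ coming from $p$.

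There is no real obstacle; the proof is eight routine verifications, best organized by exploiting the obvious symmetry between the upper two rows (where $p\in\{0,\,c_1c_2\}$) and the lower two rows (where $p$ picks up the extra $(r_1\circ c_1)(r_2\circ c_2)$ factor), and the analogous symmetry between the left two columns and the right two columns. With this organization one effectively has only a handful of genuinely independent computations to perform, each a one-line application of the $x\star x=0$ identity together with the associativity and commutativity of $(\star)$. The most error-prone part, and the only thing worth writing out carefully, is keeping track of which $(r_1\circ c_1)$ or $(r_2\circ c_2)$ factors survive in the second coordinate after cancellation; beyond that bookkeeping, the lemma is immediate from \eqref{eq:B}.
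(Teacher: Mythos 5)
Your proposal is correct and is essentially the paper's own proof: the paper simply states that the eight entries are calculated directly from the definition of ${\cal B}$ and the operation $\circ$, which is exactly the verification you carry out (your sample computations, e.g.\ for the cell $((c_1c_2,r_1),(r_1r_2,c_1))$, check out). Your additional remarks on organizing the eight checks by symmetry and on the role of $x\star x=0$ are accurate but go beyond what the paper records.
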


\begin{proof} Using the definition of ${\cal B}$ and the binary  operation $\circ$ on $B$, we can calculate the entries directly.

\end{proof}

\begin{corollary}\label{intercalate-pairs} Construct
${\cal A}$ and ${\cal B}$ as in  \eqref{eq:A} and \eqref{eq:B} and  let $r_1,r_2,c_1,c_2\in [2^m]$ and $(r_1,c_1,x),\ (r_2,c_2,x)$ be a pair of triples in $P$ which satisfy Conditions $C1$, $C2$, $C3$, and $C4$.  Then there exists a latin square ${\cal A}^*$ such that ${\cal A}^*$ is orthogonal to ${\cal B}$, the cell $((0,r_2),(0,c_2))$ of ${\cal A}^*$ contains the symbol $r_1 *c_1$ and ${\cal A}^*$ agrees with ${\cal A}$ everywhere except in the subsquare defined by the intersection of rows $(0,r)$ and columns $(0,c)$, $0\leq r,c\leq 2^m-1$.

\end{corollary}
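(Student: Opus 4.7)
The plan is to define ${\cal A}^*$ by simultaneously flipping both intercalates whose union is $I_A(r_1,c_1;r_2,c_2)$ from \lref{2intercalate-A}. That lemma verifies the eight entries occupy eight distinct cells (each in its own $2^m \times 2^m$ subsquare), so the two $2 \times 2$ intercalate patterns are disjoint and can be flipped independently; the resulting array ${\cal A}^*$ is again a latin square of order $2^{2m}$. From the table in \lref{2intercalate-A} one sees immediately that the cell $((0,r_2),(0,c_2))$, which previously contained $r_2 * c_2$, now contains $r_1 * c_1$, as required.

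The substantive step is to show that ${\cal A}^*$ remains orthogonal to ${\cal B}$. Because ${\cal A}$ and ${\cal B}$ are orthogonal, the map assigning each cell the ordered pair of its ${\cal A}$-entry and ${\cal B}$-entry is a bijection onto $[2^{2m}] \times ([2^m] \times [2^m])$. Since the flip changes ${\cal A}$ only on the eight cells of $I_A(r_1,c_1;r_2,c_2)$ and leaves ${\cal B}$ unchanged, it suffices to show that the multiset of ordered pairs arising from these eight cells is preserved.

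To verify this multiset identity I would extract the ${\cal B}$-entries from the table in \lref{2intercalate-B} and observe that the ${\cal B}$-values on the eight cells take exactly four distinct values, each appearing twice: $(0, r_2 \circ c_2)$, $(r_1 r_2, r_1 \circ c_1)$, $(c_1 c_2, c_1 c_2 (r_1 \circ c_1))$, and $(r_1 r_2 c_1 c_2, c_1 c_2 (r_2 \circ c_2))$. In each of the four pairs of cells sharing a common ${\cal B}$-value, the original ${\cal A}$-entries are exactly $r_1 * c_1$ and $r_2 * c_2$; the intercalate flip swaps these ${\cal A}$-entries within each matched pair, and so merely permutes the eight ordered pairs amongst themselves. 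Hence the global multiset of pairs is preserved, and ${\cal A}^*$ is orthogonal to ${\cal B}$.

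The main obstacle is the bookkeeping required to read off the ${\cal B}$-values at all eight cells and to confirm the four coincidences explicitly. Conditions C1--C4 enter precisely here: they are needed to ensure the eight cells are genuinely distinct (so the intercalate flips are legal) and that the four ${\cal B}$-values listed above are pairwise distinct, with Condition C4 ($r_1 r_2 \neq c_1 c_2$, hence $r_1 r_2 c_1 c_2 \neq 0$) being the critical input. Once these points are in place, the orthogonality verification reduces to the clean combinatorial matching described above.
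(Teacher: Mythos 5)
Your proposal is correct and follows essentially the same route as the paper: flip the two disjoint intercalates of $I_A(r_1,c_1;r_2,c_2)$ guaranteed by Lemma~\ref{2intercalate-A} and invoke Lemma~\ref{2intercalate-B} to see that orthogonality with ${\cal B}$ survives. Your explicit matching of the eight cells into four pairs sharing a common ${\cal B}$-value, with ${\cal A}$-entries $r_1*c_1$ and $r_2*c_2$ swapped within each pair, is exactly the verification the paper leaves implicit in the phrase ``this trade preserves the orthogonality property,'' and it checks out against the two tables.
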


\begin{proof} By Lemma \ref{2intercalate-A}  we know that
there exist two orthogonal latin squares ${\cal A}$ and ${\cal B}$, for which ${\cal A}$ contains the configuration $I_A(r_1,c_1;r_2,c_2)$ which is the union of two intercalates.  We may replace these intercalates by their disjoint mates to obtain  ${\cal A}^*$. Further by Lemma \ref{2intercalate-B} this trade preserves the orthogonality property and so  the result follows.
\end{proof}

Note that $(r_1,c_1)$ and $(r_2,c_2)$ uniquely determine $I_A(r_1,c_1;r_2,c_2)$, and indeed the next lemma verifies that under certain conditions pairs of these configurations will be non-intersecting.

\begin{lemma}\label{no intersection} Construct
${\cal A}$ and ${\cal B}$ as in (1) and (2) and let $r_1,r_2,r_3,c_1,c_2,c_3\in [2^m]$ and $(r_1, c_1, x), (r_2, c_2, x), (r_3, c_3, x)$ be triples
in $P$ which satisfy conditions $C1$ through $C4$. Then ${\cal A}$ contains $I_A(r_1, c_1; r_2, c_2)$ and $I_A(r_1, c_1; r_3, c_3)$ as
described in Lemma \ref{2intercalate-A} and furthermore $I_A(r_1, c_1; r_2, c_2) \cap I_A(r_1, c_1; r_3, c_3) = \emptyset$.
\end{lemma}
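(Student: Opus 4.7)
The plan is to reduce the stated set-disjointness question to a disjointness question about the row labels of the two configurations. Both $I_A(r_1,c_1;r_2,c_2)$ and $I_A(r_1,c_1;r_3,c_3)$ are subsets of the single latin square ${\cal A}$, so any triple lying in their intersection must occupy the same cell of ${\cal A}$ (its symbol being determined by its cell), and in particular it must share a row label. Denote the four row labels of $I_A(r_1,c_1;r_2,c_2)$ as listed in Lemma \ref{2intercalate-A} by $R_1,R_2,R_3,R_4$, and the corresponding row labels of $I_A(r_1,c_1;r_3,c_3)$, obtained by replacing $(r_2,c_2,v)$ with $(r_3,c_3,w)$ where $u=r_1\circ c_1$, $v=r_2\circ c_2$ and $w=r_3\circ c_3$, by $R'_1,R'_2,R'_3,R'_4$. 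It then suffices to prove $\{R_1,\dots,R_4\}\cap\{R'_1,\dots,R'_4\}=\emptyset$.

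I would then enumerate the $16$ candidate equalities $R_i=R'_j$ and derive a contradiction from one of the conditions C1--C4 in each case. The easy cases are those in which equating the first coordinates forces one of $c_1c_2$, $c_1c_3$, $uv$ or $uw$ to equal $0$ in $G(\star)$: these give $c_2=c_3$ or $c_1=c_3$ (violating C2), or $u=v$ or $u=w$ (violating C3), or $r_2=r_3$ (violating C1). The remaining cases each produce two simultaneous equations in $G(\star)$; multiplying them and using $x\star x=0$ eliminates $u$, $v$, $w$ and reduces the pair to a single equality of the form $r_ir_j=c_\alpha c_\beta$ with indices in $\{1,2,3\}$ of a shape forbidden by C4. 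For example, $R_3=R'_3$ gives $c_2v=c_3w$ and $r_2v=r_3w$, whose product is $c_2c_3=r_2r_3$, contradicting C4 with $i=2$, $j=k=3$; the mixed case $R_2=R'_3$ gives $c_2=c_3uw$ and $r_1=r_3uw$, whence $uw=r_1r_3$ and therefore $r_1r_3=c_2c_3$, contradicting C4 with $i=3$, $j=1$, $k=2$.

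The principal obstacle is bookkeeping rather than content: one must verify that every "hard" equation really reduces to a violation of C4 in the precise form $r_ir_j\neq c_ic_k$ with $i\neq j$ and $i\neq k$ (remembering that $j=k$ is permitted). This requires keeping track of which index $i$ is common to the $r$- and $c$-sides of the final equality; commutativity of $G(\star)$ together with the identity $x\star x=0$ does the rest of the algebra. Once all row labels across the two configurations have been shown to be distinct, the cells of $I_A(r_1,c_1;r_2,c_2)$ and $I_A(r_1,c_1;r_3,c_3)$ are automatically disjoint, hence so are the triples, establishing the lemma.
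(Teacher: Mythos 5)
Your proposal is correct and takes essentially the same approach as the paper: reduce disjointness of the two configurations to pairwise distinctness of their eight row labels, dispatch the easy coincidences via C1--C3 (first coordinate forced to $0$, or $uv=0$, etc.), and reduce each remaining coincidence to an equality $r_ir_j=c_ic_k$ forbidden by C4 by multiplying the two coordinate equations in $G(\star)$. If anything your enumeration is slightly more complete than the paper's, whose list of equations \eqref{eq:first}--\eqref{eq:sixth} omits the case $R_3=R_3'$ that you handle explicitly (yielding $r_2r_3=c_2c_3$, again excluded by C4).
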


\begin{proof} The existence of $I_A(r_1,c_1;r_2,c_2)$ and $I_A(r_1,c_1;r_3,c_3)$ follows from Lemma \ref{2intercalate-A} and conditions C1 through C4, but we need to check that they are non-intersecting.

 Considering $I_A(r_1,c_1;r_2,c_2)$ and $ I_A(r_1,c_1;r_3,c_3)$. Since $r_1,r_2,r_3$ are all distinct and $c_1,c_2,c_3$ are all distinct,
  rows $(0,r_2),\ (0,r_3), \ (c_1c_2,r_1), \ (c_1c_3,r_1) $ are all distinct. Further since $r_1\circ c_1\neq r_2\circ c_2$,
$r_1\circ c_1\neq r_3\circ c_3$, and $r_2\circ c_2\neq r_3\circ c_3$  and so we may add  rows $((r_1\circ c_1)(r_2\circ c_2),r_1(r_1\circ c_1)(r_2\circ c_2))$   and $((r_1\circ c_1)(r_3\circ c_3),r_1(r_1\circ c_1)(r_3\circ c_3))$ to this collection of distinct rows. Thus we have two remaining rows to check.

Thus  we need to check that the following cases are not possible.
\begin{align}
 (c_1c_3(r_1\circ c_1)(r_3\circ c_3),r_3(r_1\circ c_1)(r_3\circ c_3))&=(0,r_2)\label{eq:first}\\
 (c_1c_2(r_1\circ c_1)(r_2\circ c_2),r_2(r_1\circ c_1)(r_2\circ c_2))&=(0,r_3)\label{eq:second}\\
  (c_1c_3(r_1\circ c_1)(r_3\circ c_3),r_3(r_1\circ c_1)(r_3\circ c_3))&=(c_1c_2,r_1)\label{eq:third}\\
 (c_1c_2(r_1\circ c_1)(r_2\circ c_2),r_2(r_1\circ c_1)(r_2\circ c_2))&=(c_1c_3,r_1)\label{eq:fourth}\\
 (c_1c_3(r_1\circ c_1)(r_3\circ c_3),r_3(r_1\circ c_1)(r_3\circ c_3))&=((r_1\circ c_1)(r_2\circ c_2),r_1(r_1\circ c_1)(r_2\circ c_2))\label{eq:fifth}\\
  (c_1c_2(r_1\circ c_1)(r_2\circ c_2),r_2(r_1\circ c_1)(r_2\circ c_2))&=((r_1\circ c_1)(r_3\circ c_3),r_1(r_1\circ c_1)(r_3\circ c_3)).\label{eq:sixth}\end{align}

If \eqref{eq:first} is true then
\begin{align*}
c_1c_3(r_1\circ c_1)(r_3\circ c_3)&=0\\
r_3(r_1\circ c_1)(r_3\circ c_3)&=r_2.
\end{align*}
But this implies $c_1c_3=r_2r_3$ contradicting Condition C4, thus \eqref{eq:first} is not possible and similarly \eqref{eq:second} is not possible.

If \eqref{eq:third} is true then
\begin{align*}
c_1c_3(r_1\circ c_1)(r_3\circ c_3)&=c_1c_2\\
r_3(r_1\circ c_1)(r_3\circ c_3)&=r_1.
\end{align*}
This implies $r_1r_3=c_2c_3$  which contradicts Condition C4, thus \eqref{eq:third} is not possible, and similarly \eqref{eq:fourth}.
If \eqref{eq:fifth} is true then
\begin{align*}
c_1c_3(r_1\circ c_1)(r_3\circ c_3)&=(r_1\circ c_1)(r_2\circ c_2)\\
r_3(r_1\circ c_1)(r_3\circ c_3)&=r_1(r_1\circ c_1)(r_2\circ c_2).
\end{align*}
But this implies $r_1r_3=c_1c_3$ which once again contradicts Condition C4, thus \eqref{eq:fifth} is not possible, similarly \eqref{eq:sixth} is not possible.

Thus the result follows.

\end{proof}

\section{General embedding}\label{sc:general case}

Let $P$ and $Q$ be two orthogonal partial latin squares, of order $n$, volume $v$ and based on the set of symbols $[n]$. Note that there are no restrictions on $P$.  We  begin by replacing entries in some cells of $P$ to obtain a partial latin square that contains no repeated symbols. We then embed this new partial latin square and finally interchange intercalates to obtain orthogonal latin squares which contain $P$ and $Q$.

For each $x\in [n]$ that appears in $P$, define
\begin{align*}
   u_x=&\mbox{min}\{u\mid (u,w,x)\in P\}.
   \end{align*}
So that $(u_x,w_x,x)\in P$, where $u_x$ is the first row in $P$ that contains $x$. Let $m$ be any integer such that $2^{m}\geq 2n$.

\begin{theorem}\label{thm:cond embed} Let $P$ and $Q$ be a pair of orthogonal partial latin squares of order $n$ and volume $v$, on the set of symbols $[n]$
such that for all $r_1,r_2,c_1,c_2\in [2^m]$ and for each incidence of $(r_1,c_1,x),(r_2,c_2,x)\in P$ Conditions $C1$ to $C4$  are satisfied. Then $P$ and $Q$ can be embedded in orthogonal latin squares ${\cal A^*}$ and ${\cal B}$ of order
$2^{2m}$, where $m$ is an integer such that $2n\leq 2^{m}$.
\end{theorem}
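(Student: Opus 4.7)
The plan is to reduce to the basic embedding of Theorem \ref{basic case} and then restore the repeated symbols of $P$ using the intercalate trades from Section \ref{sc:trades}. First I define a modified partial latin square $\widetilde{P}$ from $P$: keep each distinguished triple $(u_x,w_x,x)$ intact and, for every other occurrence $(r,c,x) \in P$ (necessarily with $r > u_x$), replace the symbol by a fresh element drawn from $[2^{2m}] \setminus [n]$, distinct across all such replacements. Since $\widetilde{P}$ has no repeated symbols, $(\widetilde{P},Q)$ is trivially orthogonal, and Theorem \ref{basic case} produces orthogonal latin squares $\mathcal{A}$ and $\mathcal{B}$ of order $2^{2m}$ in which the top-left $2^m\times 2^m$ subsquare of $\mathcal{A}$ is an isotopic copy of $A$ (so embeds $\widetilde{P}$), while $\mathcal{B}$ embeds $Q$.

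Next, for each original triple $(r,c,x) \in P$ with $r \neq u_x$, Conditions C1--C4 hold for the pair $(u_x,w_x,x),(r,c,x) \in P$ by hypothesis, so Corollary \ref{intercalate-pairs} applies with $(r_1,c_1) = (u_x,w_x)$ and $(r_2,c_2) = (r,c)$. The trade replaces the intercalate pair $I_A(u_x,w_x;\,r,c)$ by its disjoint mates and alters exactly one cell inside the top-left subsquare, namely $((0,r),(0,c))$, whose entry becomes $u_x * w_x = x$, while orthogonality with $\mathcal{B}$ is preserved. The remaining seven cells of $I_A$ lie outside the top-left subsquare because $c_1c_2$, $r_1r_2$, and $(r_1\circ c_1)(r_2\circ c_2)$ are all nonzero by C2--C4; in particular the distinguished entry $x$ at $((0,u_x),(0,w_x))$ is not touched.

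To finish, I combine these trades, one for each non-distinguished occurrence in $P$, into a single trade on $\mathcal{A}$ producing $\mathcal{A}^*$. Lemma \ref{no intersection} yields mutual disjointness of the intercalate configurations arising from multiple repetitions of the same symbol $x$ (all sharing the anchor $(u_x,w_x)$). For two trades coming from distinct symbols $x \neq y$, disjointness of $I_A(u_x,w_x;\,r,c)$ and $I_A(u_y,w_y;\,r',c')$ has to be verified separately; the plan is to repeat the style of calculation used in Lemma \ref{no intersection}, translating each hypothetical coincidence of a row or column label into an equation in $G(\star)$ among $u_x,w_x,u_y,w_y,r,c,r',c'$ and the corresponding $\circ$-products, and then selecting the appropriate pair of same-symbol triples so that the resulting identity $r_ir_j = c_ic_k$ contradicts C4. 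Once every pair of trades is known to be disjoint the composite is well-defined, and $\mathcal{A}^*$ and $\mathcal{B}$ are orthogonal latin squares of order $2^{2m}$ embedding $P$ and $Q$.

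The main obstacle is precisely this cross-symbol disjointness check: Lemma \ref{no intersection} is stated only for a single symbol $x$, so for pairs of trades whose anchors differ the bookkeeping in the elementary abelian $2$-group has to be redone with C4 applied to carefully chosen triples. If the full disjointness computation turns out to be awkward, a fallback is to apply Corollary \ref{intercalate-pairs} one trade at a time, checking that each subsequent invocation is still valid on the current square; this reduces everything to the already-verified property that each individual trade changes only the one intended top-left cell together with cells that no other trade wants to modify.
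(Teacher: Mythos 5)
Your overall architecture is exactly the paper's: replace each non-first occurrence of a symbol in $P$ by a fresh symbol to get a square with distinct entries, embed via Theorem~\ref{basic case}, then restore the original symbols by trading the intercalate configurations $I_A(u_x,w_x;r,c)$ anchored at the first occurrence $(u_x,w_x,x)$. The same-symbol disjointness via Lemma~\ref{no intersection} and the preservation of orthogonality via Corollary~\ref{intercalate-pairs} are also as in the paper.

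The genuine gap is the step you yourself flag as the ``main obstacle'': disjointness of $I_A(u_x,w_x;r,c)$ and $I_A(u_y,w_y;r',c')$ for distinct symbols $x\neq y$. Your proposed route --- redoing the Lemma~\ref{no intersection}-style bookkeeping and deriving a contradiction with C4 --- would not go through, because C4 only constrains triples of cells carrying the \emph{same} symbol; for cells carrying different symbols there is no such hypothesis, and coincidences of row or column labels between the two configurations are genuinely possible (e.g.\ if $x$ and $y$ both have a non-first occurrence in the same row $r$, both configurations meet row $(0,r)$). What you are missing is that no equation-chasing is needed: every cell of $I_A(u_x,w_x;r,c)$ contains one of the two symbols $u_x*w_x$ and $r*c$ of the array $A$, and since the four cells $(u_x,w_x),(r,c),(u_y,w_y),(r',c')$ are distinct cells of $P$ and $A$ assigns distinct symbols of $[2^{2m}]$ to distinct cells, the two configurations use disjoint symbol sets. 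A shared cell of the latin square ${\cal A}$ would have to carry a common symbol, so the configurations cannot intersect. This one-line observation (the paper's argument) closes the gap; with it, your composite trade is well-defined and the rest of your proof is correct.
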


\begin{proof} Construct a new partial latin square $P^*$ as follows. If there exists $(r,c,y)\in P$, place $(r,c,x)$ in $P ^*$, where
 \begin{align*}
 x =
  \begin{cases}
  y, & \text{if } r = u_y \\
   e,      & \text{otherwise, where } e\in\{n,\dots,2^{2m}-1\}\text{ and  $e$ has not previously been used.}
  \end{cases}
\end{align*}
 Thus $P^*$ and $P$ will have the same filled cells, however the cells of $P^*$ will contain distinct entries.

Respectively embed $P^*$ and $Q$, in the arrays $A$ and $B$ of order $2^m$ as described in Section \ref{sc:basic case}. Let $A(*)$ and $B(\circ)$ be the respective algebraic structures. Use Theorem \ref{basic case}  to embed $A$ and $B$ in orthogonal latin square ${\cal A}$ and ${\cal B}$, of order at most
$2^{2m}$.

Define $S = \{(u_y, w_y, r_2, c_2)|(r_2, c_2, y) \in P \wedge r_2 \neq u_y\}$. By Lemma 3.1 ${\cal A}$ contains $I_A(u_y, w_y; r_2, c_2)$ for
each $(u_y, w_y, r_2, c_2)\in S$. We now show that if $(u_y, w_y, r_2, c_2)$, $(u_z, w_z, r_3,$ $c_3)$ $\in S$ then $I_A(u_y, w_y; r_2, c_2)\cap
I_A(u_z, w_z; r_3, c_3) =\emptyset $. If $y\neq z$ then $I_A(u_y, w_y; r_2, c_2)$ and $I_A(u_z, w_z; r_3, c_3)$ contain different symbols and so
do not intersect (recall that $P^*$ contains each symbol at most once). If $y = z$ then $I_A(u_y, w_y; r_2, c_2)$ and
$I_A(u_z, w_z; r_3, c_3)$ do not intersect by Lemma 3.4 because $(u_y, w_y, y)$, $(r_2, c_2, y)$, $(r_3, c_3, y)$ satisfy Conditions C1
through C4. Let ${\cal A}^*$ be the result of replacing each intercalate in $I_A(u_y, w_y; r_2, c_2)$ for some $(u_y, w_y; r_2, c_2)\in S$
with its disjoint mate (we have just shown these intercalates are pairwise disjoint). Note that ${\cal A}^*$ contains
a copy of $P$ and thus by Lemma 3.2 ${\cal A}^*$ is orthogonal to ${\cal B}$.
\end{proof}

The above embedding works provided Conditions C1 to C4 are satisfied for all incidences of $(r_1,c_1,x), (r_2,c_2,x)\in P$. If this is not the case then we begin by expanding the columns of $P$ and $Q$ and relabeling the symbols to ensure that these conditions are satisfied and then we may apply Theorem \ref{thm:cond embed}.

\begin{theorem}\label{thm:final} Let $P$ and $Q$ be a pair of orthogonal partial latin squares of order $n$, on the set of symbols $[n]$. Then $P$ and $Q$ can be embedded in orthogonal latin squares ${\cal A^*_r}$ and ${\cal B_r}$ of order 
$2^{4m}$, where $m$ is a positive integer such that $n\leq 2^{m}$.
\end{theorem}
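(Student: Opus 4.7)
The plan is to reduce to Theorem \ref{thm:cond embed}: starting from $(P,Q)$ I would construct an auxiliary pair $(P',Q')$ of orthogonal partial latin squares that satisfies Conditions C1--C4, and then apply Theorem \ref{thm:cond embed} to $(P',Q')$.

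Of the four conditions, three are essentially automatic. C1 is merely a convention on row ordering, arranged by a permutation. C2 says the three columns attached to triples of $P$ sharing a common symbol are distinct, which is forced by $P$ being a partial latin square. C3 says the corresponding $Q$-entries are distinct, which is precisely the orthogonality of $(P,Q)$. So only C4, the algebraic inequality $r_ir_j\neq c_ic_k$ in the elementary abelian $2$-group, has to be engineered.

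To force C4 I would enlarge the column alphabet by a $2$-group factor: each column label $c$ of $P$ and $Q$ is replaced by $(c,h(r,c,x))\in[2^m]\times[2^m]$, where $h(r,c,x)$ is a shift chosen per cell; rows $r$ are embedded as $(r,0)$, and the identical cell-by-cell relabeling is applied to both $P$ and $Q$. Because the relabeling is joint, orthogonality carries across, and because distinct original cells remain distinct, $P'$ remains a partial latin square. In the enlarged $2$-group,
\begin{align*}
(r_i,0)(r_j,0)=(r_ir_j,\,0),\qquad (c_i,h_i)(c_k,h_k)=(c_ic_k,\,h_ih_k),
\end{align*}
so these two group elements differ as soon as $h_i\neq h_k$. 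I would therefore assign pairwise distinct shifts to the cells of $P$ that share any given symbol; since each symbol appears in $P$ at most $n\leq 2^m$ times, such an assignment fits inside $[2^m]$. After this, I would relabel the symbols of $P'$ exactly as at the start of the proof of Theorem \ref{thm:cond embed} so that every cell of $P'$ carries a distinct entry.

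With $(P',Q')$ satisfying C1--C4 and of appropriately bounded order, Theorem \ref{thm:cond embed} applied with the corresponding new parameter then embeds $(P',Q')$, and hence $(P,Q)$ via the pull-back of the relabelings, in orthogonal latin squares ${\cal A}^*_r$ and ${\cal B}_r$ of order $2^{4m}$. The part that requires care is the shift assignment: one has to check simultaneously that it keeps $P'$ a valid partial latin square, preserves orthogonality with $Q'$, and meets every instance of C4 coming from both pairs and triples of repeated-symbol cells. All three tasks reduce to the algebraic fact displayed above, namely that distinct shifts force inequality of the relevant group sums in the second coordinate.
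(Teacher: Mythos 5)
Your reduction to Theorem \ref{thm:cond embed} and your triage of the conditions are exactly right: C1--C3 come for free from $(P,Q)$ being an orthogonal pair of partial latin squares, and the whole problem is to force C4 by pushing the column labels into a larger elementary abelian $2$-group so that the column products $c_ic_k$ land outside the subgroup containing all the row products $r_ir_j$. That is precisely the paper's strategy. However, your implementation has a genuine flaw: you let the shift $h(r,c,x)$ depend on the \emph{cell} rather than only on the column. A per-cell relabeling of column indices is not an isotopy. If two filled cells of $P$ lie in the same column $c$ but receive different shifts, they land in distinct columns $(c,h_1)\neq (c,h_2)$ of $P'$. The pair $(P',Q')$ is still a valid orthogonal pair of partial latin squares, so Theorem \ref{thm:cond embed} does apply to it; but the latin squares you obtain then contain $P'$, not $P$. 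Since embedding in this paper means literal containment $P\subseteq L$ (achieved at the end by reordering rows and columns of the big squares), and no bijection of the columns of ${\cal A}^*$ can merge the two columns $(c,h_1)$ and $(c,h_2)$ back into the single column $c$, the ``pull-back of the relabelings'' you invoke in the last step does not exist. You would have embedded a column-shredded copy of $P$, not $P$ itself.

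The fix is small and turns your argument into the paper's: make the shift a function of the column alone, $h=h(c)$, so that $c\mapsto (c,h(c))$ is injective on columns and is undone by a column permutation of the final squares. You then need $h(c_i)\neq h(c_k)$ whenever two distinct columns $c_i\neq c_k$ contain a common symbol of $P$; since $c_i\neq c_k$ is already guaranteed by C2, the trivial choice $h(c)=c$ works and no counting is required. This is exactly the paper's map $c\mapsto c+c\cdot 2^m\in[2^{2m}]$: the row products stay in the subgroup $[2^m]$, while $(c_i+c_i2^m)\star(c_k+c_k2^m)=(c_i\star c_k)+(c_i\star c_k)2^m$ lies outside that subgroup whenever $c_i\neq c_k$, which is C4. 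Your algebraic observation that only the second coordinate needs to separate the two sides is correct; the point you missed is that this coordinate must be constant on each column of $P$ for the final squares to actually contain $P$ and $Q$.
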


\begin{proof}
Let $P$ and $Q$ be two orthogonal partial latin squares, of order $n$, volume $v$ and based on the set of symbols $[n]$.  Let $m$ be a positive integer such that $n\leq 2^m$. Construct new partial latin squares $P_0$ and $Q_0$ of order $2^{2m}$ as follows:
\begin{align*}
P_0=&\{(r,c+c\cdot 2^m,e)\mid (r,c,e)\in P\}\\
Q_0=&\{(r,c+c\cdot 2^m,e)\mid (r,c,e)\in Q\}.
\end{align*}
Here $+$ and $\cdot$ are the normal operations of addition and multiplication on the positive integers.
Note that $P_0$ and $Q_0$ are of order $2^{2m}$. Observe that since $n\leq 2^m$, $2n\leq 2^{2m}$ so that $Q_0$ can be completed to a full latin square by Evans' result \cite{Evans}. Further, since $(r_1,c_1,x),(r_2,c_2,x),(r_3,c_3,x)\in P$ and $P$ is a partial latin square Conditions C1 and C2 must be satisfied. Since $Q$ is an orthogonal mate to $P$ Condition C3 must be satisfied. Let $i,j,k\in \{1,2,3\}$, $i\neq j$ and $i\neq k$.     $(r_1,c_1+c_1\cdot2^m,x),(r_2,c_2+c_2\cdot2^m,x)$ and $(r_3,c_3+c_3\cdot2^m,x) \in P_0$, then in the elementary abelian $2$-group $G(\star)$ of order $2^{2m}$, since $r_1,r_2,r_3 \in [2^m]$, $r_i\star r_j$ belongs to a subgroup of order $2^m$, but $c_i+c_i\cdot2^m$ and $c_k+c_k\cdot2^m$ are not elements of this subgroup. Indeed they belong to distinct cosets of the quotient group. Thus
 \begin{align*}
(c_i+c_i\cdot2^m)(c_k+c_k\cdot2^m)\neq r_ir_j.
\end{align*}
and so Condition C4 is satisfied.
Consequently the partial latin squares $P_0$ and $Q_0$ satisfy the conditions of Theorem \ref{thm:cond embed} and  can be embedded in orthogonal latin squares of order $2^{4m}$, validating the proof of this theorem. Note that there exists an isotopic copy of $P$ and $Q$ on the intersection of the rows $\{0\}\times[2^m]$ and columns $\{0\}\times \{c+c \cdot 2^m | c \in[2^m]\}$ in ${\cal A^*}$ and ${\cal B}$, respectively. By reordering the columns of ${\cal A^*}$ and ${\cal B}$ we may obtain the required latin squares ${\cal A^*_r}$ and ${\cal B_r}$.

\end{proof}

\begin{corollary}
Let $P$ and $Q$ be a pair of orthogonal partial latin squares of order $n$, on the set of symbols $[n]$. Then $P$ and $Q$ can be embedded in orthogonal latin squares ${\cal A^*_r}$ and ${\cal B_r}$ of order at most $16n^4$ and all orders greater than or equal to $48n^4$.
\end{corollary}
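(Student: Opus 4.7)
The plan is to derive the corollary by combining \tref{thm:final} with the classical embedding result cited in the introduction as \cite{HZ}, namely that a pair of orthogonal latin squares of order $N$ embeds in a pair of orthogonal latin squares of order $t$ for every $t\ge 3N$. Everything comes down to choosing $m$ as small as possible and then bounding $2^{4m}$ in terms of $n$.

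First I would choose $m$ to be the least positive integer with $n\le 2^{m}$. This choice forces $2^{m}\le 2n$: if $n$ is a power of two then $2^{m}=n$, and otherwise $2^{m-1}<n$ gives $2^{m}<2n$. Applying \tref{thm:final} to this $m$ produces orthogonal latin squares $\mathcal{A}_r^*$ and $\mathcal{B}_r$ of order $N=2^{4m}$ containing $P$ and $Q$. The bound $2^{m}\le 2n$ then yields
\begin{equation*}
N = 2^{4m} \le (2n)^{4} = 16n^{4},
\end{equation*}
which settles the first half of the corollary. (The edge case $n=1$ is handled by $m=1$, giving $N=16=16\cdot 1^{4}$.)

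For the second half, I would feed the pair $(\mathcal{A}_r^*,\mathcal{B}_r)$ of order $N\le 16n^{4}$ into the $3N$-embedding theorem of \cite{HZ}. Since $48n^{4}\ge 3\cdot 16n^{4}\ge 3N$, every $t\ge 48n^{4}$ satisfies $t\ge 3N$, so the pair of order $N$ embeds in a pair of orthogonal latin squares of order $t$; composing with the embedding from \tref{thm:final} places $P$ and $Q$ themselves inside orthogonal latin squares of order $t$.

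There is no real obstacle here beyond the two short bookkeeping steps above: once \tref{thm:final} is granted, the corollary is essentially an arithmetic optimisation of the exponent $m$ followed by an appeal to a standard result. The only point that needs a line of care is the inequality $2^{m}\le 2n$ coming from the minimality of $m$, since this is what converts the factor of $2^{4}=16$ in the exponent into the constants $16$ and $48=3\cdot 16$ in the statement.
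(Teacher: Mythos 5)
Your proof is correct and follows essentially the same route as the paper: take $m$ minimal with $n\le 2^m$, bound $2^{4m}\le(2n)^4=16n^4$, and then invoke the Heinrich--Zhu $3t$-embedding for all orders $\ge 48n^4$. (Your explicit handling of the $n$ a power of two case, giving $2^m\le 2n$ rather than the paper's strict $2^m<2n$, is if anything slightly more careful.)
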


\begin{proof} By Theorem \ref{thm:final} there is an embedding of order $2^{4m}$ where $m$ is the smallest integer such that $n\leq 2^{m}$. Then $2^{m-1}< n \leq 2^m$ so $2^m < 2n$. Hence $2^{4m} < 16 n^4$. Since a pair of orthogonal latin squares of order $t$ can be embedded in a pair of orthogonal latin squares of all orders greater than or equal to $3t$ \cite{HZ} the result follows.
\end{proof}

\end{document}